\begin{filecontents*}{\jobname.xmpdata}
\Title{Positive weights and self-maps}
\Author{F. Manin}
\Keywords{positive weights\sep rational homotopy theory\sep self-maps}
\Language{en-US}
\Publisher{American Mathematical Society}
\Journaltitle{Proceedings of the American Mathematical Society}
\Volume{150}
\Issue{10}
\Firstpage{4557}
\Lastpage{4566}
\end{filecontents*}

\documentclass[11pt]{amsart}
\usepackage[a-1b]{pdfx}
\usepackage[text={6.1in,8.5in},centering]{geometry}
\usepackage{amssymb,amsmath,amsthm,mathtools,extpfeil}
\usepackage[all,cmtip]{xy}
\usepackage[shortlabels]{enumitem}
\usepackage{xcolor}


\newtheorem*{thm*}{Theorem}
\newtheorem*{lem*}{Lemma}
\newtheorem{thmA}{Theorem}

\newtheorem{lem}[equation]{Lemma}
\newtheorem{prop}[equation]{Proposition}
\newtheorem*{prop*}{Proposition}
\newtheorem{cor}[equation]{Corollary}

\theoremstyle{definition}

\newtheorem*{rmkdef}{Remark on the definition}

\numberwithin{equation}{section}

\DeclareMathOperator{\id}{id}
\DeclareMathOperator{\img}{im}

\DeclareMathOperator{\coker}{coker}
\DeclareMathOperator{\Dil}{Dil}

\DeclareMathOperator{\Lip}{Lip}
\DeclareMathOperator{\Hom}{Hom}
\newcommand{\ph}{\varphi}

\begin{document}
\title{Positive weights and self-maps}
\author[F.~Manin]{Fedor Manin}
\address{Department of Mathematics, University of California, Santa Barbara, CA, United States}
\email{manin@math.ucsb.edu}
\begin{abstract}
  Spaces with positive weights are those whose rational homotopy type admits a
  large family of ``rescaling'' automorphisms.  We show that finite complexes
  with positive weights have many genuine self-maps.  We also fix the proofs of
  some previous related results.
\end{abstract}
\maketitle

\section{Main result}
Following \cite{BMSS}, who attribute the term to Morgan and Sullivan, we say that
a simply connected space has \emph{positive weights} if its rational homotopy
type has a one-parameter family of ``rescaling'' automorphisms.  A given space
will often have many such families.  A precise definition is given in
\S\ref{S:defs}.

The main result of this paper is that of any such family consisting of a
$\mathbb{Q}$'s-worth of rational automorphisms, a $\mathbb{Z}$'s-worth of them
can be realized as self-maps of any finite complex of that homotopy type.
\begin{thmA} \label{thm:appx}
  Let $Y$ be a finite simply connected CW complex with positive weights, as
  witnessed by a one-parameter family of homomorphisms
  $\lambda_t:Y_{(0)} \to Y_{(0)}$.  Let $\ell:Y \to Y_{(0)}$ be the rationalization
  map.  Then there is an integer $t_0 \geq 1$ such that for every
  $z \in \mathbb{Z}$, there is a genuine map $f_z:Y \to Y$ whose rationalization
  is $\lambda_{zt_0}$, that is, such that
  $\ell \circ f_z \simeq \lambda_{zt_0} \circ \ell$.
\end{thmA}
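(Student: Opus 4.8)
The plan is to realize $\lambda_t$ genuinely for $t$ ranging over a suitable arithmetic progression by passing through the arithmetic fracture square of $Y$ and treating one prime at a time; the good primes are automatic, and the real work is a quantitative obstruction analysis at the finitely many bad primes.

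\textbf{Setup and reduction via fracture.} Let $(\Lambda V,d)$ be the minimal model of $Y$ with its weight grading, so that $\lambda_t$ multiplies a weight-$w$ generator by $t^w$. Since $Y$ is a finite complex of dimension $n$, there are only finitely many generators in degrees $\le n$; hence there is a uniform bound $W$ on the weights occurring in $H^{1\le *\le n}(Y;\mathbb{Q})$ and in $\pi_{2\le *\le n}(Y)\otimes\mathbb{Q}$, and all of these weights are $\ge 1$ (simple connectivity plus positivity). Because $Y$ is a finite complex, $\Map(Y,-)$ carries the arithmetic homotopy pullback square relating $Y$, $Y_{(0)}$, $\prod_p Y^{\wedge}_p$ and $(\prod_p Y^{\wedge}_p)_{(0)}$ to a homotopy pullback, and $\Map(Y,\prod_p Y^{\wedge}_p)=\prod_p\Map(Y,Y^{\wedge}_p)$. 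So to build $f_z$ realizing $\lambda_t$ it suffices, for each prime $p$, to lift the map $Y\xrightarrow{\ell}Y_{(0)}\xrightarrow{\lambda_t}Y_{(0)}\to(Y^{\wedge}_p)_{(0)}$ along the rationalization $Y^{\wedge}_p\to(Y^{\wedge}_p)_{(0)}$ to a map $h_p\colon Y\to Y^{\wedge}_p$. Only finitely many Postnikov stages enter each such problem, since $H^{k}(Y;-)=0$ for $k>n$.

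\textbf{Good primes.} Call $p$ \emph{good} if $H_*(Y;\mathbb{Z})$ and $\pi_{\le n}(Y)$ are $p$-torsion-free and the change of basis between a weight-homogeneous $\mathbb{Q}$-basis and an integral basis of the free parts is invertible over $\mathbb{Z}_{(p)}$; all but finitely many primes are good. For good $p$ and any $t\in\mathbb{Z}$, $\lambda_t$ preserves all the relevant $\mathbb{Z}_{(p)}$-lattices, and one builds $h_p$ up the Postnikov tower of $Y^{\wedge}_p$ while keeping it rationally compatible with $\lambda_t$: at each stage the obstruction and indeterminacy groups are $H^{*}(Y;\pi_k(Y)\otimes\mathbb{Z}_p)$, which are torsion-free for good $p$, so each obstruction — vanishing after rationalization, because $\lambda_t$ already lifts over $Y_{(0)}$ — vanishes, and the choices can be pinned down to match $\lambda_t$ rationally. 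Positivity of the weights is used here to see that $\lambda_t$ respects the $k$-invariants, not merely cohomology.

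\textbf{Bad primes — the crux.} For each of the finitely many bad primes $p$ choose $N_p$ large and take $p^{N_p}\mid t$. Then $\lambda_t$ is divisible by a large power of $p$ on all of $H^{1\le *\le n}$ and $\pi_{2\le *\le n}$ (by the weight bound $W$ and the bounded denominators of the change of basis), so the map to be lifted is $p$-adically close to the constant map, which does lift. One again constructs $h_p$ up the Postnikov tower of $Y^{\wedge}_p$, but now the obstruction and indeterminacy groups contain $p$-torsion of order bounded by a constant $p^{e_p}$ depending only on $Y$; the delicate point — where earlier arguments were incomplete — is to organize the induction so that each partial lift is not only rationally correct but also congruent to the constant map modulo $p^{N_p}$ on $\widetilde{H}^{\ge 1}$, absorbing the torsion into the freedom available in the successive choices (the forced, ``free'' part of each choice is automatically divisible by a large power of $p$, so this is possible once $N_p$ exceeds the relevant constants). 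With this extra control the successive obstruction classes lie in the image of a correspondingly large power of $p$, hence vanish, and the residual indeterminacy can be navigated compatibly with $\lambda_t$ over $(Y^{\wedge}_p)_{(0)}$. This produces $h_p$ whenever $p^{N_p}\mid t$.

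\textbf{Conclusion and main obstacle.} Put $t_0=\prod_{p\ \mathrm{bad}}p^{N_p}$. For every $z\in\mathbb{Z}$, with $t=zt_0$, the lifts $h_p$ over all primes together with the rational map $\lambda_t\circ\ell$ are produced (good primes as above since $t\in\mathbb{Z}$, bad primes since $p^{N_p}\mid t_0\mid t$), and the fracture square of the first step assembles them into a genuine map $f_z\colon Y\to Y$ with $\ell\circ f_z\simeq\lambda_{zt_0}\circ\ell$. I expect the main obstacle to be precisely the bad-prime bookkeeping in the third step — propagating the simultaneous ``rationally correct'' and ``$p$-adically small'' hypotheses through the Postnikov tower so that the torsion obstructions genuinely vanish — with a secondary point being the verification that positive weights force $\lambda_t$ to descend to compatible data along an entire Postnikov tower rather than only on (co)homology.
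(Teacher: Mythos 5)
Your route (arithmetic fracture square plus prime-by-prime Postnikov obstruction theory) is genuinely different from the paper's, but it has a gap at exactly the point you flag, and it is the same gap that sank the earlier published arguments of Shiga, Basu--Meier--S., and Amann that this paper was written to repair. At a bad prime $p$ you argue that taking $p^{N_p}\mid t$ makes $\lambda_t\circ\ell$ ``$p$-adically close to the constant map, which does lift,'' and that the torsion in the obstruction and indeterminacy groups can be ``absorbed'' by keeping each partial lift congruent to the constant map mod $p^{N_p}$. But divisibility of $\lambda_t$ by $p^{N}$ holds on a weight eigenbasis over $\mathbb{Q}$, and at a bad prime that eigenbasis is not a $\mathbb{Z}_{(p)}$-basis of the integral lattice --- this is precisely the ``$\mathbb{Q}^2=\mathbb{Q}(1,1)\oplus\mathbb{Q}(-1,1)$ does not descend to $\mathbb{Z}$'' failure. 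More seriously, ``congruent to the constant map mod $p^{N_p}$'' is not a property of a homotopy class of partial lifts that you have shown propagates up the tower: the obstruction at stage $k{+}1$ depends on the choice of lift and of homotopy at stage $k$, and there is no argument that a choice exists which is simultaneously rationally equal to $\lambda_t$ and $p$-adically small in the sense you need. You assert this is possible ``once $N_p$ exceeds the relevant constants,'' but that assertion is the theorem. (Two smaller points: the fracture square requires agreement in $(\prod_p Y^{\wedge}_p)_{(0)}$, not in $\prod_p(Y^{\wedge}_p)_{(0)}$, so the reduction to independent lifts at each prime needs justification; and in the good-prime step the fibers of the Moore--Postnikov tower of $Y^{\wedge}_p\to(Y^{\wedge}_p)_{(0)}$ have divisible torsion homotopy groups of the flavor $\mathbb{Q}_p/\mathbb{Z}_p$, not $\mathbb{Z}_p$, so ``the obstruction vanishes because it vanishes rationally'' does not apply verbatim; you need to rephrase via the Postnikov tower of $Y^{\wedge}_p$ itself and control the indeterminacy.)

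For comparison, the paper sidesteps the bad-prime bookkeeping entirely. It builds a single auxiliary finite complex $K$, rationally equivalent to $Y$, by running a Moore--Postnikov tower \emph{downward} from $Y_{(0)}$ and choosing the $k$-invariants in $H^{n+1}(K_n;(\mathbb{Q}/\mathbb{Z})^d)$ by hand so that the resulting integral lattices $\pi_m(K)$ are \emph{spanned by weight eigenvectors} --- i.e.\ instead of trying to diagonalize a given lattice over $\mathbb{Z}$, it constructs the lattice to be diagonal. On $K$ the maps $\lambda_p$ are then realized by genuine self-maps $r_p$, and the transfer between $K$ and $Y$ is done with the Mimura--O'Neill--Toda characterization of finite complexes admitting self-maps that are trivial on mod-$p$ homology. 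If you want to salvage your approach, the missing ingredient is a proof of the lifting statement at bad primes, and the cleanest known route to that statement is essentially the construction the paper carries out.
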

The class of spaces with positive weights is large; for example, it includes all
formal spaces \cite[Thm.~12.7]{SulLong}, homogeneous spaces
\cite[Prop.~3.7]{BMSS}, and smooth complex algebraic varieties \cite{Morgan}.
Indeed, it is somewhat nontrivial to find a simply connected space which does not
have positive weights.  The lowest-dimensional nonexample, as far as we know, is
a complex given in \cite[\S4]{MT} which is constructed by attaching a 12-cell to
$S^3 \vee \mathbb{C}\mathbf{P}^2$; other, much higher-dimensional non-examples
are given in \cite{ArLu,CLoh,CV,Am}.

We state a corollary for formal spaces which follows immediately by
\cite[Theorem 12.7]{SulLong}, which states that every formal rational homotopy
type has a one-parameter family of automorphisms which induces the grading
automorphisms on $H^*(Y;\mathbb Q)$ which send a class
$\alpha \in H^n(Y;\mathbb{Q})$ to $t^n\alpha$.
\begin{cor} \label{formal}
  Let $Y$ be a simply connected formal finite complex.  Then there is an integer
  $t_0 \geq 1$ such that for every $z \in \mathbb Z$, there is a map
  $f_z:Y \to Y$ which induces multiplication by $(zt_0)^n$ on $H^n(Y;\mathbb Q)$
  for every $n$.
\end{cor}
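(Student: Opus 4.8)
The plan is to obtain Corollary~\ref{formal} as a direct application of Theorem~\ref{thm:appx}. First I would feed the formality hypothesis into Sullivan's Theorem~12.7 of \cite{SulLong}: since $Y$ is formal, its rational homotopy type carries a one-parameter family of rational self-maps $\lambda_t:Y_{(0)}\to Y_{(0)}$, which for $t\neq 0$ are automorphisms, with $\lambda_1=\id$, and which induce on $H^*(Y;\mathbb{Q})$ the grading automorphism sending $\alpha\in H^n(Y;\mathbb{Q})$ to $t^n\alpha$. The one point worth verifying is that this family is of exactly the shape demanded by the definition of ``positive weights'' in \S\ref{S:defs} (an algebraic $\mathbb{Q}$'s-worth of rational automorphisms); but this is precisely what Sullivan's theorem asserts, so $Y$ has positive weights, witnessed by $\{\lambda_t\}$.

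Next I would apply Theorem~\ref{thm:appx} to this witnessing family. It produces an integer $t_0\geq 1$ and, for each $z\in\mathbb{Z}$, a genuine self-map $f_z:Y\to Y$ with $\ell\circ f_z\simeq\lambda_{zt_0}\circ\ell$, where $\ell:Y\to Y_{(0)}$ is the rationalization. The remaining step is purely bookkeeping on rational cohomology. Because $Y$ is simply connected of finite type, $\ell$ induces a graded ring isomorphism $\ell^*:H^*(Y_{(0)};\mathbb{Q})\xrightarrow{\ \sim\ }H^*(Y;\mathbb{Q})$. Applying $H^*(-;\mathbb{Q})$ to the homotopy-commutative square yields $f_z^*\circ\ell^*=\ell^*\circ\lambda_{zt_0}^*$, hence $f_z^*=\ell^*\circ\lambda_{zt_0}^*\circ(\ell^*)^{-1}$; and since $\lambda_{zt_0}^*$ acts as multiplication by $(zt_0)^n$ in degree $n$ while $\ell^*$ preserves degree, $f_z^*$ is multiplication by $(zt_0)^n$ on $H^n(Y;\mathbb{Q})$ for every $n$.

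I do not expect a genuine obstacle here: all of the work is in Theorem~\ref{thm:appx}, and the corollary is extracted from it by translating the homotopy-level statement ``$f_z$ rationalizes to $\lambda_{zt_0}$'' into the cohomology-level statement via $\ell^*$. The only things to keep an eye on are trivialities: the direction of the composite in the homotopy-commutative square, and the compatibility of ``multiplication by $t^n$ on $H^n$'' with the identification $\ell^*$. If anything, the subtle point is upstream --- checking that Sullivan's grading family really does meet the precise technical requirements of the \S\ref{S:defs} definition of positive weights (normalization at $t=1$, regularity of the dependence on $t$) --- but the cited theorem is stated in exactly those terms, so this amounts to a pointer rather than an argument.
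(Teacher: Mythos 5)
Your argument is exactly the one the paper intends: invoke Sullivan's Theorem 12.7 to see that formality gives a positive-weights family inducing the grading automorphism on rational cohomology, then feed that family into Theorem~\ref{thm:appx} and transport the conclusion across the isomorphism $\ell^*$. The paper treats this as an immediate consequence and does not spell out the cohomology bookkeeping, but the route is the same.
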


While this paper is motivated by an application of this corollary to quantitative
homotopy theory, the author hopes that it will be of wider interest.

\section{Prior work}
The statement of Theorem \ref{thm:appx} is not quite present in the literature,
although a number of prior papers state similar results and give arguments which
would imply this theorem.  However, the author was unable to fill in the details
of these arguments; this is the major motivation for this short paper.
\begin{itemize}
\item A slight weakening of Corollary \ref{formal} was originally stated by Shiga
  \cite{Shiga}.  However, his proof has significant issues.  In particular,
  the argument on the bottom of p.~432 seems to rely on the claim that, e.g., in
  the equation $\mathbb Q^2=\mathbb Q(1,1) \oplus \mathbb Q(-1,1)$ one can
  replace $\mathbb Q$ with $\mathbb Z$, and the author was not able to fix the
  argument to avoid this.
\item A number of similar results are discussed in \cite{BMSS}.  The main result
  of that paper is that for finite complexes, the positive weight condition is
  equivalent to \emph{$p$-universality} for any $p$ a prime or zero.  A space is
  $p$-universal, a notion introduced by Mimura and Toda \cite{MT}, if for every
  $q \neq p$ it has a self-map that induces isomorphisms on mod-$p$ homology
  (rational in the case $p=0$) and the zero map on mod-$q$ homology.  In
  particular, $0$-universality is closely related to the conclusion of Theorem
  \ref{thm:appx}, and Proposition 3.3 and Lemma 3.4 of \cite{BMSS} are similar to
  Lemmas \ref{lem:KtoK} and \ref{lem:Kn} below.  However, in this case the author
  was again unable to complete the argument as written: the map $f_q$ constructed
  in the inductive step of Lemma 3.4 depends on a choice of homotopy and it's
  unclear how or whether one can pick a version that would satisfy the claimed
  conditions.  In this paper, we give an alternate proof of Proposition 3.3 of
  \cite{BMSS}.
\item Amann \cite[Theorem 4.2]{Am} asserts a result similar to Shiga's, but for
  spaces with positive weights in general.  However, the proof again contains a
  mistake: an obstruction lies in the cohomology of the wrong space.  Amann has
  pointed out to the author that this mistake is similar to that in the published
  proof of \cite[Lemma B.1]{BK}, which has been fixed on the arXiv, and can be
  fixed in a similar way.  This is different from our method, but could be used
  to give a slightly weaker form of Theorem \ref{thm:appx}.
\end{itemize}
Our proof method has major similarities to those of \cite{BMSS} (in overall
strategy) and \cite{Am} (in the use of the Moore--Postnikov tower of the
rationalization map $Y \to Y_{(0)}$).

\section{Positive weights} \label{S:defs}

We assume knowledge of Sullivan's model of rational homotopy theory; the reader
is referred to \cite{FHT} or \cite{GrMo} for the basics.

Let $Y$ be a simply connected space of finite type, and denote its Sullivan
minimal DGA by $\mathcal M_Y^*$.  Then $Y$ has \emph{positive weights} if there
is a set $\{x_i\}$ of indecomposable generators of $\mathcal M_Y^*$ and
corresponding integers $n_i \geq 1$ such that for each $t \in \mathbb{Q}$, there
is a homomorphism $\lambda_t:\mathcal{M}_Y^* \to \mathcal{M}_Y^*$ such that
$\lambda_t(x_i)=t^{n_i}x_i$.

Notice that when $t \neq 0$, this $\lambda_t$ is an automorphism; the set
$\{\lambda_t:t \in \mathbb Q^\times\}$ is a subgroup of the automorphism group of
$\mathcal{M}_Y^*$ and is called a \emph{one-parameter subgroup} or \emph{family}.

Since there is an equivalence of homotopy categories between rational
spaces\footnote{That is, simply connected CW complexes whose homotopy groups are
  rational vector spaces.} of finite type and their minimal DGAs, such an
automorphism $\lambda_t$ induces a homotopy automorphism of the rationalization
$Y_{(0)}$, which by an abuse of notation we may also call $\lambda_t$.

Note that there are often many possible choices of basis and of the $n_i$.  For
example, given one such family $\lambda_t$ any other automorphism $\ph$ of
$\mathcal M_Y^*$, one can get a new family by conjugating $\lambda_t$ by $\ph$.
Concretely, let $Y=S^2 \times S^3$, and choose:
\begin{itemize}
\item $\lambda_t$ to be the product of degree $t$ maps on $S^2_{(0)}$ and
  $S^3_{(0)}$;
\item $\ph$ to be the rationalization of the map
  \[S^2 \times S^3 \to S^2 \times S^3\]
  which sends $S^2$ to itself and $S^3$ to $S^2 \vee S^3$ via
  $\operatorname{Hopf}+\id_{S^3}$.\footnote{Such a map exists because the
    Whitehead product $[\id_{S^2},\operatorname{Hopf}+\id_{S^3}]$ is zero in
    $S^2 \times S^3$.}
\end{itemize}
Then $\ph^{-1}\lambda_t\ph$ and $\lambda_t$ are different families of
automorphisms.

It is clear from the definition that $\lambda_t$ induces diagonalizable
automorphisms on $\pi_n(Y) \otimes \mathbb{Q}$.  The same is true for homology
and cohomology:
\begin{prop} \label{prop:diagH}
  If $\lambda_t:\mathcal M_Y^* \to \mathcal M_Y^*$ is a one-parameter family of
  automorphisms, then there are also bases for $H^*(Y;\mathbb{Q})$ and
  $H_*(Y;\mathbb{Q})$ consisting of eigenvectors of the maps induced by
  $\lambda_t$.
\end{prop}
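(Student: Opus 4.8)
The plan is to lift the whole question to the minimal model $\mathcal{M}_Y^*$, where the one-parameter family manufactures a genuine extra grading, and then push the conclusion forward to cohomology and, by duality, to homology. First I would introduce the \emph{weight grading} on $\mathcal{M}_Y^*$. Since $\mathcal{M}_Y^*$ is the free graded-commutative algebra on the chosen generators $x_i$, declaring $x_i$ to have weight $n_i$ and extending multiplicatively yields a decomposition $\mathcal{M}_Y^* = \bigoplus_{w \geq 0}\mathcal{M}_Y^*(w)$, where $\mathcal{M}_Y^*(w)$ is spanned by those monomials whose generator-weights sum to $w$. By construction $\lambda_t$ acts on $\mathcal{M}_Y^*(w)$ as multiplication by $t^w$. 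Because $Y$ is simply connected of finite type, each $\mathcal{M}_Y^n$ is finite-dimensional and only finitely many weights occur in a given cohomological degree, so this is an honest, locally finite bigrading.

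The key step is that the differential respects the weight grading. Since $\lambda_t$ is a homomorphism of DGAs, $d\lambda_t = \lambda_t d$; applying this to a weight-homogeneous element $\alpha \in \mathcal{M}_Y^n(w)$ gives $\lambda_t(d\alpha) = t^w\, d\alpha$ for every $t \in \mathbb{Q}$, and since the functions $t \mapsto t^v$ are linearly independent, comparing weight components forces $d\alpha \in \mathcal{M}_Y^{n+1}(w)$. (Equivalently: $\lambda_2$ alone is already semisimple with the weight spaces as its eigenspaces, and $d$ commutes with it.) Hence $H^*(\mathcal{M}_Y^*,d)$ inherits the weight grading, $H^n(Y;\mathbb{Q}) \cong \bigoplus_w H^n(\mathcal{M}_Y^*(w),d)$, and the automorphism induced by $\lambda_t$ acts as $t^w$ on the weight-$w$ summand. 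Under the standard identification $H^*(\mathcal{M}_Y^*) \cong H^*(Y;\mathbb{Q})$ this induced automorphism is exactly the map induced by the homotopy self-map $\lambda_t$ of $Y_{(0)}$, which agrees with the asserted map on $H^*(Y;\mathbb{Q})$ because rationalization is an isomorphism on rational cohomology. Any basis of $H^*(Y;\mathbb{Q})$ refining this direct-sum decomposition is therefore a basis of eigenvectors, simultaneously for all $t$.

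For homology I would simply dualize: over the field $\mathbb{Q}$ we have $H_*(Y;\mathbb{Q}) \cong \Hom_{\mathbb{Q}}(H^*(Y;\mathbb{Q}),\mathbb{Q})$, and the map induced by $\lambda_t$ on homology is the transpose of the one on cohomology; the basis dual to a cohomology eigenbasis is then a homology eigenbasis with the same eigenvalues. I do not anticipate a serious obstacle here — once the weight grading is in place the argument is essentially formal. The only points deserving a little care are the bookkeeping that the weight grading is locally finite, so that ``comparing weight components'' is legitimate, and the standard but not wholly trivial verification that the cohomology automorphism coming from the space-level map $\lambda_t : Y_{(0)} \to Y_{(0)}$ is the one induced by $\lambda_t$ on the minimal model.
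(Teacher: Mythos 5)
Your proof is correct and follows essentially the same route as the paper: the paper's version phrases it as abstract linear algebra (the action of $\lambda_t$ on $\mathcal M_Y^*$ is diagonalizable, the cocycles form an invariant subspace, the diagonalization passes to the quotient by coboundaries, and one dualizes for homology), while your weight-grading decomposition is exactly that eigenspace decomposition made explicit. The extra details you supply (that $d$ preserves weights because it commutes with $\lambda_t$, and the compatibility of the model-level and space-level induced maps) are correct and harmless elaborations of the same argument.
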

\begin{proof}
  The action of $\lambda_t$ on $\mathcal M_Y^*$ is diagonalizable.  Since
  $\lambda_t$ sends cocycles in $\mathcal M_Y^*$ to cocycles, they form an
  invariant subspace, which is therefore also diagonalizable.  This
  diagonalization passes to the quotient by coboundaries, giving the result for
  cohomology.  Dualizing gives us the same result for homology.
\end{proof}

In \cite[Theorem 2.7]{BMSS}, it is shown that the positive weight condition is
independent of coefficients: a minimal $\mathbb Q$-DGA has positive weights if
and only if its tensor product with $\mathbb R$ or another larger field does.
Many additional topological and algebraic properties of the positive weight
condition are discussed in \cite{PWHT}, including closure under operations such
as wedge and product.  Most interestingly, the condition is its own
Eckmann--Hilton dual.

\begin{rmkdef}
  In the definition of a space with positive weights, the assignment of
  ``weights'', $x_i \mapsto n_i$, extends uniquely to a second grading on
  $\mathcal M_Y^*$ that respects the multiplication.  Then a space with positive
  weights is one which has such a second grading with respect to which the
  differential has degree zero.  This obviously equivalent definition is the one
  more often given, e.g.\ in \cite[Definition 2.1]{BMSS}.

  In \cite[Proposition 2.3]{BMSS}, this equivalence is shown over other
  coefficient fields.
\end{rmkdef}

\section{Corollaries and related results}

A useful result closely related to Theorem \ref{thm:appx} shows that there are
many maps between two spaces of the same positive-weight rational homotopy type:
\begin{thmA} \label{thm:XtoY}
  Let $Y$ and $Y'$ be two rationally equivalent simply connected finite
  complexes, with rationalizations $\ell:Y \to Y_{(0)}$ and
  $\ell':Y' \to Y_{(0)}$.  Let $\lambda_t:Y_{(0)} \to Y_{(0)}$ be a one-parameter
  family of homotopy automorphisms.  Then there are maps $f:Y \to Y'$ and
  $g:Y' \to Y$ and a $t \in \mathbb Z$ such that $\ell gf \simeq \lambda_t\ell$
  and $\ell'fg \simeq \lambda_t\ell'$.
\end{thmA}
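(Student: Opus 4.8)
The plan is to reduce Theorem~\ref{thm:XtoY} to two applications of a single lifting statement — the evident generalization of Theorem~\ref{thm:appx} in which the target is allowed to be a finite complex $Y'$ merely rationally equivalent to $Y$ rather than $Y$ itself — together with the multiplicativity of the family. Writing $\lambda_t(x_i)=t^{n_i}x_i$ on a system of generators of the minimal model shows that $\lambda_s\circ\lambda_t\simeq\lambda_{st}$ as homotopy self-maps of $Y_{(0)}$, and that $\lambda_1\simeq\id$. The generalization I have in mind is: there is an integer $t_0\ge 1$ such that for every $z\in\mathbb Z$ the map $\lambda_{zt_0}\circ\ell\colon Y\to Y_{(0)}$ admits a lift through $\ell'$, that is, a map $f_z\colon Y\to Y'$ with $\ell'\circ f_z\simeq\lambda_{zt_0}\circ\ell$; by symmetry there is likewise an integer $t_0'\ge 1$ and, for each $z$, a map $g_z\colon Y'\to Y$ with $\ell\circ g_z\simeq\lambda_{zt_0'}\circ\ell'$. (Theorem~\ref{thm:appx} is the special case $Y'=Y$, $\ell'=\ell$.) Granting this, put $t_1=\operatorname{lcm}(t_0,t_0')$ and take $f=f_{t_1/t_0}$ and $g=g_{t_1/t_0'}$, so $\ell'\circ f\simeq\lambda_{t_1}\circ\ell$ and $\ell\circ g\simeq\lambda_{t_1}\circ\ell'$; then
\[
  \ell\circ g\circ f\simeq\lambda_{t_1}\circ\ell'\circ f\simeq\lambda_{t_1}\circ\lambda_{t_1}\circ\ell=\lambda_{t_1^2}\circ\ell,\qquad
  \ell'\circ f\circ g\simeq\lambda_{t_1}\circ\ell\circ g\simeq\lambda_{t_1^2}\circ\ell',
\]
so Theorem~\ref{thm:XtoY} holds with $t=t_1^2$.

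For the lifting statement I would run the argument I expect proves Theorem~\ref{thm:appx}; nothing in that argument forces the source and target to coincide, and the only change is that one now works with the Moore--Postnikov tower $Y'\to\cdots\to W_n\to W_{n-1}\to\cdots\to W_1=Y_{(0)}$ of the rationalization $\ell'$ of the \emph{target}. Each $W_n\to W_{n-1}$ is a principal fibration with fiber $K(\pi_nF',n)$, where $F'$ is the homotopy fiber of $\ell'$, a rationally trivial space, so $\pi_nF'$ is a torsion group for every $n$. Since $Y$ is a finite complex, only finitely many stages must be traversed; and given a partial lift $h_{n-1}\colon Y\to W_{n-1}$ of $\lambda_{zt_0}\circ\ell$, the obstruction to lifting over $W_n\to W_{n-1}$ lies in $H^{n+1}(Y;\pi_nF')$, where the rescalings $\lambda_{zt_0}$ — whose eigenvalues on cohomology are controlled powers of $zt_0$, cf.\ Proposition~\ref{prop:diagH} — can be used, for $t_0$ suitably divisible, to annihilate the obstruction. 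This is exactly the role of Lemmas~\ref{lem:KtoK} and~\ref{lem:Kn}; iterating over the finitely many stages produces $f_z$, and $g_z$ arises symmetrically.

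The main obstacle is the very one on which the earlier arguments of \cite{Shiga}, \cite{BMSS}, and \cite{Am} stumble, and it is internal to this inductive step: neither the partial lift $h_{n-1}$ nor the null-homotopy witnessing the vanishing of its obstruction is canonical, so the correction passing from $h_{n-1}$ to $h_n$ must be chosen so as not to disturb the relation $\ell'\circ h_{n-1}\simeq\lambda_{zt_0}\circ\ell$ (read off within the tower) that is already in force — keeping this compatibility under control through all the stages is the crux. Once the lifting statement is secured in this way, the two-space bookkeeping of the first paragraph completes the proof of Theorem~\ref{thm:XtoY} with no further input.
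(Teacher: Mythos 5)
Your reduction paragraph is fine: constructing $f_z\colon Y\to Y'$ with $\ell'f_z\simeq\lambda_{zt_0}\ell$ and $g_z\colon Y'\to Y$ with $\ell g_z\simeq\lambda_{zt_0'}\ell'$, and then composing, does yield Theorem~\ref{thm:XtoY}. (In fact you only need a single $f$ and a single $g$, not the whole $\mathbb Z$-family, since the conclusion asks for some $t\in\mathbb Z$.) The problem is entirely in the lifting statement, and you have not actually proved it; you have only restated the difficulty.

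The plan you sketch --- induct up the Moore--Postnikov tower of $\ell'\colon Y'\to Y_{(0)}$, note the obstructions live in $H^{n+1}(Y;\pi_nF')$ with $\pi_nF'$ torsion, and use the rescalings to kill them --- is precisely the argument that the paper explains is incomplete in \cite{Shiga}, in \cite[Lemma 3.4]{BMSS}, and in \cite[Theorem 4.2]{Am}. You correctly name the obstruction (the partial lift and the null-homotopy witnessing the vanishing obstruction are non-canonical, so one cannot control how the next obstruction class transforms under replacing $zt_0$ by a larger multiple), and then write ``Once the lifting statement is secured in this way\ldots'' --- but nothing in the proposal secures it. That sentence is the entire content of the theorem, and it is left open. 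You also misidentify what Lemmas~\ref{lem:Kn} and~\ref{lem:KtoK} do: they do \emph{not} run obstruction theory against the tower of $\ell'$. The paper's route is genuinely different: it constructs an auxiliary finite complex $K$ with a rational equivalence $q\colon K\to Y_{(0)}$ together with self-maps $r_p$ realizing $\lambda_p$ (Lemmas~\ref{lem:Kn}, \ref{lem:KtoK}), where the $k$-invariants are \emph{chosen} compatibly with the $r_p$ so no after-the-fact obstruction-matching is needed; it then produces $f\colon Y\to K$ by an infinite mapping telescope/homotopy-equivalence argument (Lemma~\ref{lem:YtoK}), not by obstruction theory; and it produces $g\colon K\to Y$ (and likewise $g'\colon K\to Y'$) via the Mimura--O'Neill--Toda lifting theorem (Lemma~\ref{lem:KtoY}). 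Theorem~\ref{thm:XtoY} then follows by composing $Y\to K\to Y'$ and $Y'\to K\to Y$. So the gap in your proposal is exactly the step the paper was written to circumvent, and the circumvention uses a construction your sketch does not contain.
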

We prove this along with Theorem \ref{thm:appx} in the next section.

A manifold is \emph{flexible} in the sense of Crowley and L\"oh \cite{CLoh} if it
has self-maps of infinitely many degrees (or equivalently, at least one degree
other than $0$ and $\pm 1$).  An immediate corollary of Theorem \ref{thm:appx}
and Proposition \ref{prop:diagH} is the following result:
\begin{cor}
  Manifolds with positive weights are flexible.
\end{cor}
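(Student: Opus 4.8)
The plan is to derive this directly from Theorem~\ref{thm:appx} and Proposition~\ref{prop:diagH}, as advertised. Let $M$ be a closed simply connected $n$-manifold with positive weights, witnessed by a one-parameter family $\lambda_t : M_{(0)} \to M_{(0)}$, and let $\ell : M \to M_{(0)}$ be the rationalization. Since a closed simply connected manifold is homotopy equivalent to a finite simply connected CW complex (and, being simply connected, is orientable, so we fix an orientation), Theorem~\ref{thm:appx} applies: there is an integer $t_0 \geq 1$ and, for every $z \in \mathbb Z$, a genuine self-map $f_z : M \to M$ with $\ell \circ f_z \simeq \lambda_{zt_0} \circ \ell$. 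In particular $f_z$ and $\lambda_{zt_0}$ induce the same endomorphism of $H^*(M;\mathbb Q)$, so it suffices to understand the action of $\lambda_t$ on the top cohomology $H^n(M;\mathbb Q) \cong \mathbb Q$.

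The key point to check is that $\lambda_t$ acts on $H^n(M;\mathbb Q)$ by multiplication by $t^w$ for some integer $w \geq 1$. By Proposition~\ref{prop:diagH} the one-dimensional space $H^n(M;\mathbb Q)$ is spanned by an eigenvector of $\lambda_t$, so the action is by \emph{some} scalar; the content is that the scalar has the stated form with $w$ strictly positive. This follows from the definition in \S\ref{S:defs}: $\lambda_t$ scales a monomial $x_{i_1}\cdots x_{i_k}$ in the minimal model $\mathcal M_M^*$ by $t^{n_{i_1}+\cdots+n_{i_k}}$, so the weight grading decomposes $\mathcal M_M^*$ (and hence $H^*(M;\mathbb Q)$) into $\lambda_t$-eigenspaces with eigenvalues $t^m$, $m \in \mathbb Z_{\geq 0}$. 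Because $M$ is simply connected, every generator $x_i$ has degree $\geq 2$ and weight $n_i \geq 1$, so the weight-$0$ part of $\mathcal M_M^*$ is just the ground field in degree $0$; thus $H^n(M;\mathbb Q)$, lying in degree $n \geq 2$, sits in a single eigenspace of weight $w \geq 1$.

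Combining the two steps, $f_z$ acts on $H^n(M;\mathbb Q)$, equivalently on $H_n(M;\mathbb Z)\otimes\mathbb Q \cong \mathbb Q$, by multiplication by $(zt_0)^w$, which is precisely $\deg f_z$. Since $t_0 \geq 1$ and $w \geq 1$, the set $\{\deg f_z : z \in \mathbb Z\} \supseteq \{(zt_0)^w : z \in \mathbb Z\}$ is infinite; already $\deg f_2 = (2t_0)^w \geq 2 \notin \{0,\pm 1\}$, so $M$ is flexible. I do not anticipate a genuine obstacle here: the two cited results do all the real work, and the only thing requiring a moment's care is the strict positivity of the weight $w$ of the fundamental class, which is forced by simple connectivity.
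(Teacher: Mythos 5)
Your argument is correct and is exactly the one the paper intends (the paper states the corollary as an immediate consequence of Theorem~\ref{thm:appx} and Proposition~\ref{prop:diagH} without writing out a proof). The one point that needed care --- that the fundamental class lies in a single eigenspace of strictly positive weight $w$, because in positive degree every monomial in the minimal model has weight at least $1$ --- is precisely the point you identify and verify.
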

This was previously essentially stated by Amann \cite[Theorem 4.2]{Am}.  Another,
quicker proof is implicit in a recent paper of Costoya, Mu\~noz, and Viruel
\cite[Theorem 3.2]{CMV}.

Finally, we explore simple quantitative implications of our results.  Given
finite complexes $X$ and $Y$ with a piecewise Riemannian metric, the
\emph{growth function} $g_{[X,Y]}(L)$ of the set $[X,Y]$ of homotopy classes of
maps $X \to Y$ is the number of classes that have representatives of with
Lipschitz constant at most $L$, as a function of $L$.  This notion was first
studied by Gromov \cite{GrHED,GrMS,GrQHT}.  While the definition uses
the metrics on $X$ and $Y$, the asymptotics of this function depend only on the
homotopy types of the two spaces.  Indeed, in \cite[\S6]{IRMC} it was shown,
based on the results of \cite{BMSS}, that if $X$ and $Y$ have positive weights,
then the growth function only depends on their rational homotopy type.

In fact, $g_{[X,Y]}(L)$ is always bounded by a polynomial in $L$ when $Y$ is
simply connected or, more generally, nilpotent \cite[Corollary 4.7]{IRMC}.  On
the other hand, since $[X,Y]$ is more or less the set of solutions to a system of
diophantine equations, general lower bounds are hard to come by.  However, for
spaces with positive weights, Theorem \ref{thm:appx} provides such a lower bound:
\begin{thmA}
  Suppose that $Y$ is a simply connected finite complex with positive weights.
  Then the growth function $g_{[Y,Y]}(L)$ is bounded below by $L^r$ for some
  rational $r$.
\end{thmA}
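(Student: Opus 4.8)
The plan is to produce, for each integer $z$, a self-map $f_z:Y \to Y$ realizing $\lambda_{zt_0}$ as in Theorem~\ref{thm:appx}, and to show that (i) infinitely many of these maps are pairwise non-homotopic, and (ii) the map $f_z$ has a representative whose Lipschitz constant is at most polynomial in $z$. Granting both, if $f_z$ has Lipschitz constant $\leq C|z|^k$ and the maps $\{f_z : 1 \leq z \leq N\}$ fall into at least $cN$ distinct homotopy classes, then the number of homotopy classes with a representative of Lipschitz constant $\leq L$ is at least $c \cdot (L/C)^{1/k}$, giving $g_{[Y,Y]}(L) \gtrsim L^{1/k}$, so one may take any rational $r \leq 1/k$.

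For step (i), observe that $f_z$ induces $\lambda_{zt_0}$ on $H^*(Y;\mathbb Q)$. By Proposition~\ref{prop:diagH} there is a basis of $H^*(Y;\mathbb Q)$ consisting of eigenvectors of the family $\lambda_t$; on an eigenvector of weight $n_i$ (with some $n_i \geq 1$, which exists since $Y$ is simply connected and not contractible), $\lambda_{zt_0}$ acts by $(zt_0)^{n_i}$. Since distinct values of $z$ give distinct eigenvalues $(zt_0)^{n_i}$ (for $z$ ranging over positive integers, say), the induced maps on rational cohomology are pairwise distinct, hence the $f_z$ are pairwise non-homotopic. So in fact \emph{all} the $f_z$ with $z \geq 1$ are distinct, and $c$ can be taken close to $1$.

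For step (ii) — which I expect to be the main obstacle — I would fix a finite CW structure on $Y$ and build $f_z$ cell by cell, controlling the Lipschitz constant inductively over skeleta. The key point is that $\lambda_{zt_0}$ acts on $\pi_n(Y)\otimes\mathbb Q$, and on a $\lambda_t$-eigenvector of weight $m$ this action is multiplication by $(zt_0)^m$; thus the relevant attaching-map data and the homotopies used in the construction of $f_z$ from Theorem~\ref{thm:appx} can be arranged to depend polynomially (in fact, through fixed powers) on $z$. Concretely, the Moore--Postnikov tower argument behind Theorem~\ref{thm:appx} lifts $\lambda_{zt_0}$ stage by stage; at each stage the obstruction and the choice of lift live in cohomology groups with coefficients in $\pi_n(Y)$, and the ``size'' of the chosen cochain grows at most polynomially in the eigenvalues, hence polynomially in $z$. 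One then invokes the standard fact (as in \cite{IRMC} or Gromov's work) that a cellular map whose values on cells are bounded in a suitable combinatorial norm by $P(z)$ admits a genuine Lipschitz representative with constant $O(P(z))$, using that $Y$ is a finite complex with a fixed metric. The care needed is to make the inductive choices of homotopies uniformly polynomial — the same delicacy flagged in the discussion of \cite{BMSS} and \cite{Am} — but since we are allowed to pass to $zt_0$ and only need \emph{some} polynomial bound, there is enough slack: rescaling absorbs the ambiguity in the choice of homotopy into a bounded correction, and the dominant contribution at stage $n$ scales like $(zt_0)^{m_n}$ for the maximal weight $m_n$ appearing. Setting $k := \max_i n_i$, one gets Lipschitz constant $O(z^k)$ and hence $r = 1/k$ works.
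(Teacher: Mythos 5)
Your overall strategy --- produce the maps $f_z$ from Theorem \ref{thm:appx}, show they are pairwise non-homotopic, and bound their Lipschitz constants polynomially in $z$ --- is the same as the paper's. Step (i) is correct, and is in fact a useful explicit version of something the paper leaves implicit: by Proposition \ref{prop:diagH} the rationalization $\lambda_{zt_0}$ multiplies a cohomology eigenvector of weight $n_i \geq 1$ by $(zt_0)^{n_i}$, and these scalars are pairwise distinct for $z \geq 1$, so the classes $[f_z]$ are distinct.

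Step (ii), which you rightly flag as the main obstacle, is where there is a genuine gap. The cell-by-cell / Moore--Postnikov construction you sketch is not carried out, and its central claim --- that every inductive choice of lift and homotopy can be made with size polynomial in $z$, with ``rescaling absorbing the ambiguity into a bounded correction'' --- is exactly the delicate uniformity issue, not a routine verification; it is the same kind of unjustified inductive choice of homotopy that this paper criticizes in the arguments of \cite{BMSS} and \cite{Am}. The paper does not re-derive any of this: it invokes the shadowing principle \cite[Theorem 4--1]{PCDF} as a black box, which says that a homomorphism $m_Y\lambda_{zt_0}:\mathcal M_Y^*(\mathbb R) \to \Omega^*Y$ realized up to homotopy by a genuine map (here $f_z$) is realized by a map whose Lipschitz constant is controlled by its dilatation, yielding $\Lip f_z \leq C(Y)\bigl[\max\{(zt_0)^{n_i/\dim x_i} : \deg(x_i) \leq \dim Y\}+1\bigr]$. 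You need to either cite such a quantitative realization theorem or prove one; as written, your step (ii) assumes it. A smaller point on the exponent: the Lipschitz constant needed to scale a degree-$\dim x_i$ generator by $(zt_0)^{n_i}$ is of order $(zt_0)^{n_i/\dim x_i}$, so the natural bound is $r=\min\{\dim x_i/n_i\}$ over generators of degree at most $\dim Y$, rather than your $1/\max_i n_i$; both suffice for the statement, but your cruder bound only follows once the quantitative realization step is actually in place.
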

\begin{proof}
  By Theorem \ref{thm:appx} there is a sequence of maps $f_z:Y \to Y$ realizing
  $\lambda_{zt_0}:\mathcal M_Y^* \to \mathcal M_Y^*$ for every $z \in \mathbb{Z}$.
  The latter induce maps on the $\mathbb R$-minimal DGA of $Y$ which we likewise
  call $\lambda_{zt_0}$.  Let $m_Y:\mathcal M_Y^*(\mathbb R) \to \Omega^*Y$ be a
  minimal model for the differential forms on $Y$.  By the shadowing principle
  \cite[Theorem 4--1]{PCDF}, we can find a map homotopic to $f_z$ with Lipschitz
  constant controlled by a notion of ``size'' of the homomorphism
  $m_Y\lambda_{zt_0}:\mathcal M_Y^*(\mathbb R) \to \Omega^*Y$.  Specifically, put a
  norm on the vector space $V_k=\Hom(\pi_k(Y),\mathbb R)$ of indecomposables in
  $\mathcal M_Y^*(\mathbb R)$ for each $k \leq \dim Y$, and for every
  $\ph:\mathcal M_Y^*(\mathbb R) \to \Omega^*Y$ let
  \[\Dil(\ph)=
  \max_{k \in \{2,\ldots,\dim Y\}} \lVert \ph|_{V_k} \rVert_{\mathrm{op}}^{1/k}.\]
  This measurement depends on the choices of which elements we consider
  indecomposable, of norms and of $m_Y$, but only up to a multiplicative
  constant.  In particular,
  \[\Dil(m_Y\lambda_{zt_0}) \leq \max \bigl\{C(Y)(zt_0)^{n_i/\dim x_i} \mid \deg(x_i) \leq \dim Y\bigr\}.\]
  By the shadowing principle, this means that we can choose $f_z$ so that
  \[\Lip f_z \leq C'(Y)\bigl[\max \bigl\{(zt_0)^{n_i/\dim x_i} \mid \deg(x_i) \leq \dim Y\bigr\}+1\bigr],\]
  and so $g_{[Y,Y]}(L) \geq L^{\min \{\dim x_i/n_i \mid \deg(x_i) \leq \dim Y\}}$.
\end{proof}

\section{Proof of Theorems \ref{thm:appx} and \ref{thm:XtoY}}
In this section, let $Y$ be a simply connected finite complex equipped with a
rationalization map $\ell:Y \to Y_{(0)}$ and a one-parameter family of
automorphisms $\lambda_t:\mathcal{M}_Y^* \to \mathcal{M}_Y^*$ which induce maps
$Y_{(0)} \to Y_{(0)}$ which we also call $\lambda_t$.

We prove Theorems \ref{thm:appx} and \ref{thm:XtoY} using a series of lemmas.
\begin{lem} \label{lem:Kn}
  For every $n$, there is a complex $K_n$ and a rational equivalence
  $q_n:K_n \to Y_{(0)}$ with the following properties:
  \begin{enumerate}[(i)]
  \item For $m \leq n$, $\pi_m(K_n)$ is free abelian.
  \item For each prime $p$, there is a map $r_{p,n}:K_n \to K_n$ such that
    $q_n \circ r_{p,n} \simeq \lambda_p \circ q_n$.  Moreover, the induced map on
    $\pi_m(K_n)$, $m \leq n$, has a $\mathbb{Z}$-eigenbasis.
  \item For $m>n$, $\pi_m(K_n)$ is a $\mathbb Q$-vector space, and therefore
    $q_{n*}:\pi_m(K_n) \to \pi_m(Y_{(0)})$ is an isomorphism.
  \end{enumerate}
\end{lem}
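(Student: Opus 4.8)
The plan is to construct the $K_n$ by induction on $n$, passing from $K_{n-1}$ to $K_n$ by ``de-rationalizing'' the $n$-th homotopy group while leaving dimensions above $n$ untouched. The base case is trivial: since $Y$ is simply connected one may take $K_1 = Y_{(0)}$, $q_1 = \id$, and $r_{p,1} = \lambda_p$. The one feature of positive weights that makes the inductive step possible is that, by definition, the $\lambda_t$ act simultaneously diagonalizably on each $\pi_m(Y_{(0)}) = \pi_m(Y)\otimes\mathbb{Q}$ with eigenvalues $t^{n_i}$; in particular $\lambda_p$ has integer eigenvalues and preserves every lattice that is diagonal with respect to a common eigenbasis, so the homotopy groups of $K_n$ can be kept free abelian (and finitely generated, since they are also rationally finite-dimensional) without losing the $\lambda_p$-action.

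For the inductive step, suppose $K_{n-1}$, $q_{n-1}$ and the $r_{p,n-1}$ have been built, and let $P := P_{n-1}(K_{n-1})$ be the $(n-1)$-st Postnikov section; by hypothesis (i) it is a simply connected complex with finitely generated homology, and the $r_{p,n-1}$ descend to self-maps $\bar r_p$ of $P$ which, through the induced map $\bar q : P \to P_{n-1}(Y_{(0)})$, are compatible with the $\lambda_p$. The $n$-th $k$-invariant of $K_{n-1}$ is a class $\kappa \in H^{n+1}(P;\pi_n(Y_{(0)}))$ (using the isomorphism $\pi_n(K_{n-1}) \cong \pi_n(Y_{(0)})$ of hypothesis (iii)), and by naturality $\kappa = \bar q^{\,*}\kappa^{Y_{(0)}}$. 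Since $H^{n+1}(P;\mathbb{Z})$ is finitely generated, $\kappa$ involves only finitely many ``coordinates'' in $\pi_n(Y_{(0)})$, so one may choose a lattice $L_n \subseteq \pi_n(Y_{(0)})$ which is diagonal with respect to a common eigenbasis of the $\lambda_t$ (hence automatically $\lambda_p$-invariant for every prime $p$) and large enough to contain those coordinates; then $\kappa$ lifts to some $\eta \in H^{n+1}(P;L_n)$, and $Q_n := \operatorname{hofib}(\eta : P \to K(L_n,n+1))$ is an $n$-truncated complex with $\pi_n(Q_n) = L_n$, $\pi_m(Q_n) = \pi_m(P)$ for $m<n$, which (because $\eta$ lifts $\bar q^{\,*}\kappa^{Y_{(0)}}$) admits a map $Q_n \to P_n(Y_{(0)})$ covering $\bar q$, and this map is a rational equivalence. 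Finally let $K_n$ be a CW replacement of the homotopy pullback of $Q_n \to P_n(Y_{(0)}) \leftarrow Y_{(0)}$: its homotopy fibre over $Q_n$ is the $n$-connected cover of $Y_{(0)}$, so $\pi_m(K_n)$ is free abelian for $m\le n$ and equal to $\pi_m(Y_{(0)})$ for $m>n$, and $q_n : K_n \to Y_{(0)}$ is a rational equivalence since it has the same homotopy fibre as $Q_n \to P_n(Y_{(0)})$. This gives (i) and (iii).

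For (ii) I would build $r_{p,n}$ by first lifting $\bar r_p$ over the principal fibration $Q_n \to P$ and then pulling back against $\lambda_p : Y_{(0)} \to Y_{(0)}$. A self-map of $Q_n$ covering $\bar r_p$ exists exactly when $\bar r_p^{\,*}\eta$ lies in the image of the transgression of this fibration, i.e.\ in the subgroup $\{\phi_*\eta : \phi \in \Hom(L_n,L_n)\} \subseteq H^{n+1}(P;L_n)$. Because $\eta$ lifts $\kappa$ and $\bar r_p^{\,*}\kappa = (\lambda_p)_*\kappa$ already holds in $H^{n+1}(P;\pi_n(Y_{(0)}))$, the difference $\bar r_p^{\,*}\eta - (\lambda_p|_{L_n})_*\eta$ lies in the kernel of $H^{n+1}(P;L_n) \to H^{n+1}(P;\pi_n(Y_{(0)}))$, a finite group coming from the torsion of $H^*(P;\mathbb{Z})$ (in fact from $\operatorname{Ext}(H_n(P),L_n)$). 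Granting that this correction term stays inside $\{\phi_*\eta\}$, the lift exists, can be taken compatible with $Q_n \to P_n(Y_{(0)})$ and with $\lambda_p$, and so induces $\lambda_p|_{L_n}$ on $\pi_n$; pulling it together with $\lambda_p$ through the defining pullback square yields $r_{p,n} : K_n \to K_n$ with $q_n r_{p,n} \simeq \lambda_p q_n$. The ``moreover'' clause is then automatic, since $q_{n*}$ is injective on each $\pi_m$, $m\le n$, with image the lattice $L_m$, so the induced map there is $\lambda_p|_{L_m}$, which has the eigenbasis defining $L_m$.

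The main obstacle is exactly the parenthetical ``granting'' above: choosing the lattices $L_m$ and, for each $m$, the lift $\eta$ of the $k$-invariant in a single way that makes the lift of $\bar r_p$ exist for \emph{every} prime $p$ simultaneously. The relation $\bar r_p^{\,*}\eta = (\lambda_p|_{L_n})_*\eta$ holds only up to a finite torsion ambiguity, and reconciling this finite ambiguity with infinitely many primes at once is precisely the point that the arguments of Shiga and of \cite{BMSS} seem to leave unjustified. I expect the fix to come from exploiting the freedom in $\eta$ — its component in $\operatorname{Ext}(H_n(P),L_n)$ is unconstrained, as $\kappa$ has no such component — together with a judicious enlargement of $L_n$, so as to force $\bar r_p^{\,*}\eta$ into $\{\phi_*\eta\}$ for all $p$; checking that such a choice exists and does not disturb the inductive hypotheses needed at stage $n+1$ is the technical heart of the proof.
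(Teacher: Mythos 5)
Your overall strategy is the one the paper uses: build $K_n$ as successive stages of a Moore--Postnikov tower for $\ell:Y\to Y_{(0)}$, choosing $k$-invariants (equivalently, lattices inside the rational homotopy groups) so that the $\lambda_p$ descend to self-maps. However, you explicitly flag the decisive difficulty --- choosing the lattice $L_n$ and the lift $\eta$ so that the lift of $\bar r_p$ exists for \emph{all} primes $p$ simultaneously --- and leave it as a ``granting.'' That is precisely the nontrivial content of the lemma (and precisely the point where, as you and the paper both note, the arguments of Shiga and of \cite{BMSS} are incomplete), so the proposal as written has a genuine gap rather than a complete proof.

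The way the paper closes this gap is worth spelling out, because it avoids the lifting ambiguity you ran into rather than trying to control it. Instead of lifting the rational $k$-invariant from $\pi_n(Y_{(0)})$-coefficients to $L_n$-coefficients (where the kernel $\operatorname{Ext}(H_n(P),L_n)$ creates the finite ambiguity you describe), the paper attaches the next stage via a classifying map $\kappa:K_n\to K((\mathbb{Q}/\mathbb{Z})^d,n+1)$ and uses that $\mathbb{Q}/\mathbb{Z}$ is an injective $\mathbb{Z}$-module, so $H^{n+1}(K_n;\mathbb{Q}/\mathbb{Z})\cong\Hom(H_{n+1}(K_n),\mathbb{Q}/\mathbb{Z})$ with no Ext correction: $\kappa$ is literally a homomorphism, which one may then write down explicitly. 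Concretely, after splitting $H_{n+1}(K_n)$ using an eigenbasis $\{x_i\}$ for $\pi_{n+1}(K_n)\otimes\mathbb{Q}\cong\mathbb{Q}^d$ and a section $s$ from $H_{n+1}((K_n)_n)$, one sets $\kappa\circ s=0$ and $\kappa(qx_i)=Nqe_i$ for a single integer $N$ chosen large enough to annihilate the finite torsion interaction coming from $B\subseteq H_{n+1}((K_n)_n)$. Because $r_{p,n*}$ multiplies $x_i$ by $p^{n_i}$ and the torsion constraint is independent of $p$, this one choice of $N$ makes $\kappa\circ r_{p,n*}$ factor through $\kappa$ for \emph{every} prime $p$ at once, yielding all the lifts $r_{p,n+1}$ and the $\mathbb{Z}$-eigenbasis $\{x_i/N\}$ of $\pi_{n+1}(K_{n+1})$. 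Your intuition that the fix comes from exploiting the freedom in the lift together with enlarging the lattice (your $L_n$ is the paper's $\bigoplus_i\langle\tfrac{1}{N}x_i\rangle$) is correct in spirit, but the uniform-in-$p$ argument --- the torsion part is killed outright, and the $p$-dependence is confined to the eigenvalues, which automatically preserve the image of $\kappa$ --- is what you need to actually carry it out.
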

\begin{proof}
  We will construct the $K_n$ as successive stages of a Moore--Postnikov tower
  with base $Y_{(0)}$.  That is, we take $K_1=Y_{(0)}$ (in which case the base case
  is trivially true) and then construct a tower
  \[\xymatrix{
    K_{n+1} \ar[rd]^{q_{n+1}} \ar[d]^{f_{n+1}} \\
    K_n \ar[r]^{q_n} & Y_{(0)} 
  }\]
  such that the homotopy fiber of $f_{n+1}$ is a $K(\pi,n)$ (note the nonstandard
  indexing).  In particular, any such construction automatically satisfies (iii).

  Now suppose we have constructed $K_n$ and maps $q_n:K_n \to Y_{(0)}$ and
  $r_{p,n}:K_n \to K_n$ satisfying (i)--(iii).  We will now construct the next
  stage of the Moore--Postnikov tower, as well as maps $r_{p,n+1}$ which are lifts
  of $r_{p,n}$ along $f_{n+1}$, in the sense that
  \[f_{n+1} \circ r_{p,n+1} \simeq r_{p,n} \circ f_n.\]
  Since $\pi_n(K_n)$ is free abelian and we would like the same for
  $\pi_{n+1}(K_{n+1})$, we get
  \[\pi_{n+1}(K_n,K_{n+1}) \cong \mathbb{Q}^d/\mathbb{Z}^d,\]
  where $d$ is the rank of $\pi_{n+1}(Y)$.  Therefore, to fix $K_{n+1}$,
  it suffices to specify a $k$-invariant
  $\kappa \in H^{n+1}(K_n;(\mathbb{Q}/\mathbb{Z})^d)$ for the pullback diagram
  \[\xymatrix{
    K_{n+1} \ar[r] \ar@{->>}[d]_{f_{n+1}} &
    \mathcal PK((\mathbb Q/\mathbb Z)^d, n+1) \ar[d] \\
    K_n \ar[r]^-\kappa & K((\mathbb Q/\mathbb Z)^d, n+1).
  }\]
  
  Since $\mathbb{Q}/\mathbb{Z}$ is an injective $\mathbb Z$-module,
  \[H^{n+1}(K_n;\mathbb{Q}/\mathbb{Z}) \cong
  \Hom(H_{n+1}(K_n),\mathbb Q/\mathbb Z).\]
  Therefore we can think of $\kappa$ as a homomorphism
  $H_{n+1}(K_n) \to (\mathbb{Q}/\mathbb Z)^d$.  Moreover, the composition
  $\kappa \circ h:\pi_{n+1}(K_n) \to (\mathbb Q/\mathbb Z)^d$, where $h$ is the
  Hurewicz homomorphism, fits into the long exact sequence of homotopy groups
  \[\cdots \to \pi_{n+1}(K_n) \xrightarrow{\kappa \circ h}
  (\mathbb{Q}/\mathbb Z)^d \to \pi_n(K_{n+1}) \to \pi_n(K_n) \to 0 \to \cdots.\]
  Since we would like $\pi_n(K_{n+1}) \cong \pi_n(K_n)$, $\kappa \circ h$ needs to
  be surjective.
  
  Denote the $n$th Postnikov stage of $K_n$ by $(K_n)_n$.  To compute
  $H_{n+1}(K_n)$, we apply the Serre spectral sequence to the map
  $K_n \to (K_n)_n$, whose homotopy fiber is an $n$-connected rational space $W$
  with $H_{n+1}(W) \cong \mathbb Q^d$.  This gives us a short exact sequence
  \[0 \to A \to H_{n+1}(K_n) \to H_{n+1}((K_n)_n) \to 0\]
  where $A=\coker\bigl(d:H_{n+2}((K_n)_n) \to H_{n+1}(W)\bigr)$. Since $(K_n)_n$ is
  of finite type, the first term is $\mathbb{Q}^d$ modulo a finitely generated
  subgroup, and the last term is finitely generated.  In particular, the first
  term is an injective $\mathbb Z$-module, so the sequence splits.
  
  Now in order to pick the desired $\kappa$, we would like to understand the
  action of the various $r_{p,n}$ on $H_{n+1}(K_n)$.  Specifically, we would like
  to pick $\kappa$ so that $\img(\kappa \circ r_{p,n*}) \subseteq \img(\kappa)$.
  Then since $\kappa \circ f_{n+1*}=0$ by construction, we also get
  $\kappa \circ r_{p,n*} \circ f_{n+1*}=0$, and therefore there is a lift
  \[\xymatrix{
    K_{n+1} \ar@{-->}[r]^{r_{p,n+1}} \ar@{->>}[d]_{f_{n+1}} &
    K_{n+1} \ar[r] \ar@{->>}[d]_{f_{n+1}} &
    \mathcal PK((\mathbb Q/\mathbb Z)^d, n+1) \ar[d] \\
    K_n \ar[r]^{r_{p,n}} & K_n \ar[r]^-\kappa & K((\mathbb Q/\mathbb Z)^d, n+1).
  }\]
  
  The automorphism $\lambda_t$ induces diagonalizable linear transformations with
  eigenvalues $t^\alpha$ for various integers $\alpha$ on both
  $\pi_*(K_n) \otimes \mathbb Q$ and $H_*(K_n;\mathbb{Q})$.  In particular, we
  can choose a basis of eigenvectors $x_i$ for $\mathbb{Q}^d$, as well as
  additional eigenvectors $y_j \in H_{n+1}(K_n)$ which, together with those $x_i$
  whose images in $A \otimes \mathbb{Q}$ are nonzero, form a basis for
  $H_{n+1}(K_n;\mathbb{Q})$.  The $y_j$, together with a choice of splitting for
  the torsion elements, determine a splitting
  $s:H_{n+1}((K_n)_n) \to H_{n+1}(K_n)$.  The eigendecomposition determines the
  action of $r_{p,n}$ on homology except for its action on the torsion subgroup
  $B \subseteq H_{n+1}((K_n)_n)$.  However, the image of $B$ will always be
  contained in the finite subgroup $B \oplus \{a \in A: \lvert B \rvert a=0\}$.
  
  Now we fix $\kappa$.  Write the codomain as
  $\bigoplus_{i=1}^d (\mathbb Q/\mathbb Z)e_i$.  Then we set $\kappa \circ s=0$
  and, for $q \in \mathbb Q$, $\kappa(qx_i)=Nqe_i$, where $N$ is large enough
  that $\{a \in A: \lvert B \rvert a=0\}$ is sent to zero.  As a result,
  \[\kappa \circ r_{p,n*}(s(H_{n+1}((K_n)_n)))=0\quad\text{and}\quad
  \kappa \circ r_{p,n*}(x_i)=p^{n_i}x_i,\]
  and therefore $\img(\kappa \circ r_{p,n*}) \subseteq \img(\kappa)$.  This shows
  that $r_{p,n} \circ f_{n+1}$ lifts to a map $r_{p,n+1}$.  Moreover, the generators
  $x_i/N$ of
  \[\pi_{n+1}(K_{n+1})=\ker(\kappa \circ h)=\textstyle{\bigoplus_i \bigl\langle\frac{1}{N}x_i\bigr\rangle}\]
  form a $\mathbb{Z}$-eigenbasis for
  $(r_{p,n+1})_*:\pi_{n+1}(K_{n+1}) \to \pi_{n+1}(K_{n+1})$.
\end{proof}
\begin{lem} \label{lem:Z/pZ}
  Given $K_n$ and $r_{p,n}$ as in Lemma \ref{lem:Kn}, there is a power of $r_{p,n}$
  which induces the zero map on $H_*(K_n;\mathbb{Z}/p\mathbb{Z})$ for all
  $* \leq n$.
\end{lem}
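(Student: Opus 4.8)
The plan is to exploit the $\mathbb{Z}$-eigenbasis structure of $(r_{p,n})_*$ on the homotopy groups $\pi_m(K_n)$ for $m \le n$, and to propagate this control to homology via the mod-$p$ Hurewicz/Postnikov machinery. First I would record what Lemma \ref{lem:Kn}(ii) actually gives: for each $m \le n$ the group $\pi_m(K_n)$ is free abelian with a basis $\{x_j\}$ on which $(r_{p,n})_*$ acts by $x_j \mapsto p^{a_j} x_j$ for integers $a_j \ge 1$ (the $a_j$ are among the weights $n_i$, hence positive since $q_n \circ r_{p,n} \simeq \lambda_p \circ q_n$ and all weights are $\ge 1$). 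Consequently, on $\pi_m(K_n) \otimes \mathbb{Z}/p\mathbb{Z}$ the map $(r_{p,n})_*$ is identically zero, and more importantly $(r_{p,n}^N)_*$ on $\pi_m(K_n)$ is divisible by $p^N$ (coordinatewise $p^{Na_j}$), so in particular for $N$ large it kills any fixed finite $p$-group quotient one might care about.

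Next I would pass from homotopy to homology. Let $(K_n)_n$ be the $n$th Postnikov stage, so that $K_n \to (K_n)_n$ is an $(n+1)$-equivalence and hence an isomorphism on $H_*(-;\mathbb{Z}/p)$ for $* \le n$; since $r_{p,n}$ commutes with the Postnikov projection up to homotopy, it suffices to produce a power of the induced self-map of $(K_n)_n$ that is zero on mod-$p$ homology in degrees $\le n$. Now $(K_n)_n$ is built from finitely many stages $K(\pi_m, m)$, $m \le n$, each with $\pi_m$ finitely generated free abelian and $(r_{p,n})_*$ acting by a matrix all of whose eigenvalues are positive powers of $p$. On an Eilenberg--MacLane space $K(\mathbb{Z}^k, m)$, a self-map inducing multiplication by a matrix $M$ on $\pi_m$ induces on $H_*(-;\mathbb{Z}/p)$ (a finite-dimensional graded $\mathbb{F}_p$-algebra in each bounded range) a map determined functorially by $M \bmod p$; since $M \equiv 0 \pmod p$ here, a bounded power of it is zero on all of $H_*(K(\mathbb{Z}^k,m);\mathbb{Z}/p)$ in the finitely many relevant degrees $\le n$. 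I would make this precise by a downward induction on the Postnikov stages of $(K_n)_n$: assuming $r_{p,n}^{N_1}$ is zero on $H_*(-;\mathbb{Z}/p)$, $*\le n$, for the $(m{-}1)$st stage, the fibration $K(\pi_m,m) \to (K_n)_m \to (K_n)_{m-1}$ and its Serre spectral sequence (with $\mathbb{Z}/p$ coefficients) let one combine the base estimate with the fiber estimate $r_{p,n}^{N_2}\simeq 0$ on $H_*(K(\pi_m,m);\mathbb{Z}/p)$, $*\le n$, to get some $N_1+N_2$ (or a product, tracking filtration length) that works for $(K_n)_m$; the point is only that some finite power suffices, and the range $* \le n$ is bounded so the spectral sequence has finitely many columns and rows in play.

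The main obstacle I anticipate is the spectral sequence bookkeeping in the inductive step: a self-map that vanishes on the $E_\infty$-page need not literally vanish on $H_*$, only up to the associated-graded filtration, so one power of $r_{p,n}$ kills each filtration quotient but one needs to iterate as many times as there are nonzero filtration steps (at most $n+1$ of them in the relevant range) to kill $H_*$ itself --- a standard ``nilpotence from vanishing on the associated graded'' argument, but one must check the filtration lengths are uniformly bounded over the finitely many stages. A cleaner alternative I would consider, avoiding the spectral sequence entirely, is to observe that $H_*((K_n)_n;\mathbb{Z}/p)$ in degrees $\le n$ is a finite $\mathbb{F}_p$-vector space and that $r_{p,n}^*$ on it is, by naturality, a function of $(r_{p,n})_*$ on the finitely many $\pi_m \otimes \mathbb{Z}/p = 0$; so $r_{p,n}^*$ factors through a self-map of mod-$p$ data induced by the zero map on all homotopy groups mod $p$, whence $r_{p,n}$ is ``mod-$p$ homologically trivial up to nilpotence'' --- one then takes the power to be the dimension of $\bigoplus_{*\le n} H_*((K_n)_n;\mathbb{Z}/p)$, since an endomorphism of a finite-dimensional vector space that is topologically nilpotent (which follows here because each iterate strictly increases Postnikov connectivity of the induced map mod $p$) satisfies $T^{\dim} = 0$.
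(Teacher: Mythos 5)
Your main argument is correct, but it takes a different (more self-contained) route than the paper: the paper dispatches this lemma by citing the direction (b$'$) $\Rightarrow$ (b) of Theorem 2.1 of Mimura--O'Neill--Toda, which is exactly the classical statement that a self-map acting trivially on mod-$p$ homotopy data of a simply connected space has a power acting trivially on mod-$p$ (co)homology in a bounded range. What you have written is essentially a proof of that implication from scratch: observe $(r_{p,n})_*\equiv 0 \pmod p$ on each $\pi_m(K_n)$ for $m\le n$ thanks to the $\mathbb{Z}$-eigenbasis with eigenvalues $p^{a_j}$, $a_j\ge 1$; reduce to the $n$th Postnikov stage; note that a self-map of $K(\mathbb{Z}^k,m)$ that is $\equiv 0\pmod p$ on $\pi_m$ kills all positive-degree mod-$p$ cohomology (by naturality of Steenrod operations applied to the fundamental classes, it is actually zero on one application, not just nilpotent --- you are slightly overcautious here); and run the Serre spectral sequence up the finite Postnikov tower, using the standard ``vanishing on $E_\infty$ gives filtration shift, so iterate'' trick. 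The power grows multiplicatively by the filtration length $n+1$ at each of the $\le n$ stages, which is fine since all that is needed is finiteness. Your ``downward'' should be ``upward,'' but the induction as described is sound. Relative to the paper, your version buys transparency and self-containment at the cost of length; the paper's citation buys brevity but requires the reader to locate and adapt the source.

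One caution: the ``cleaner alternative'' you sketch at the end contains a claim that is not quite right. It is not true that the induced map on $H_*((K_n)_n;\mathbb{Z}/p)$ ``is, by naturality, a function of'' the induced maps on $\pi_m\otimes\mathbb{Z}/p$; two self-maps of a Postnikov piece can agree on all $\pi_*\otimes\mathbb{Z}/p$ and still differ on mod-$p$ homology by terms of lower Postnikov filtration. That is precisely what the spectral-sequence bookkeeping is for, and why you get nilpotence rather than vanishing. Likewise, the parenthetical ``each iterate strictly increases Postnikov connectivity of the induced map mod $p$'' is the conclusion you are trying to reach, not an independent reason; as written, the alternative is circular. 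Stick with your spectral sequence argument, which is complete.
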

\begin{proof}
  This is essentially the direction (b$'$) $\Rightarrow$ (b) of
  \cite[Theorem 2.1]{MOT}.
\end{proof}
\begin{lem} \label{lem:KtoK}
  There is a finite complex $K$ and a rational equivalence $q:K \to Y_{(0)}$ with
  the following properties:
  \begin{enumerate}[(i)]
  \item For each $m \leq \dim Y$, $\pi_m(K)$ is free abelian, and for each
    $m>\dim Y$, $H_m(K)=0$.
  \item For each prime $p$, there is a map $r_p:K \to K$ such that
    $q \circ r_p \simeq \lambda_p \circ q$.  Moreover, for every prime
    $p' \neq p$, $r_p$ is a $p'$-equivalence, i.e.~it induces isomorphisms on
    $H^*(K;\mathbb Z/p'\mathbb Z)$.
  \item For each prime $p$, there is a power $s_p$ of $r_p$ which induces the
    zero map on $H_*(K;\mathbb{Z}/p\mathbb{Z})$.
  \end{enumerate}
\end{lem}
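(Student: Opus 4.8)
The plan is to build $K$ from $K_{\dim Y}$ (provided by Lemma \ref{lem:Kn}) by killing the rational homotopy above dimension $\dim Y$ and then passing to a finite model. Write $n=\dim Y$. By Lemma \ref{lem:Kn}(iii), for $m>n$ the group $\pi_m(K_n)$ is a $\mathbb Q$-vector space; we want to cone these off without disturbing the lower part of the tower. Concretely, I would attach cells to $K_n$ to kill $\pi_m(K_n)$ for each $m>n$ in turn, obtaining a space $\widehat K$ with $\pi_m(\widehat K)=\pi_m(K_n)$ for $m\le n$ and $\pi_m(\widehat K)=0$ for $m>n$; since the attached cells are in dimensions $>n$ and $H_m(K_n)$ already satisfies no finiteness constraint there, one arranges $H_m(\widehat K)=0$ for $m>n$ as well — this is where (i) comes from. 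The rational equivalence $q_n:K_n\to Y_{(0)}$ extends over $\widehat K$ (the cells we attach map in by rational triviality of the relevant homotopy groups of $Y_{(0)}$), giving $\widehat q:\widehat K\to Y_{(0)}$; it is still a rational equivalence because we only changed things rationally trivially above dimension $n$ and $Y_{(0)}$ has no homotopy killed. Finally, $\widehat K$ has finitely many nonzero homology groups, each finitely generated (the torsion is finitely generated since $(K_n)_n$ is of finite type as in Lemma \ref{lem:Kn}, and the rational part is finite-dimensional), so $\widehat K$ is homotopy equivalent to a finite complex $K$.

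Next I would transport the self-maps. The maps $r_{p,n}:K_n\to K_n$ from Lemma \ref{lem:Kn}(ii) need to be extended over the attached cells. Because $r_{p,n}$ lifts $\lambda_p$, and $\lambda_p$ acts on the rational homotopy groups $\pi_m(Y_{(0)})$, $m>n$, with nonzero eigenvalues, one can extend $r_{p,n}$ to a self-map $\widehat r_p$ of $\widehat K$ compatible with $\widehat q$: attaching a cell along a class $\alpha$ requires the image of $\alpha$ under $\widehat r_p$ to again bound, which holds since that homotopy has been killed. Thus $\widehat q\circ\widehat r_p\simeq\lambda_p\circ\widehat q$, which via the equivalence $\widehat K\simeq K$ gives the map $r_p:K\to K$ with $q\circ r_p\simeq\lambda_p\circ q$.

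It remains to establish (ii) and (iii). For (iii): by Lemma \ref{lem:Z/pZ}, some power of $r_{p,n}$ kills $H_*(K_n;\mathbb Z/p)$ for $*\le n$; since $H_*(K;\mathbb Z/p)=0$ for $*>n$ anyway, the corresponding power $s_p$ of $r_p$ kills $H_*(K;\mathbb Z/p)$ in all degrees, as claimed. For (ii), the $p'$-equivalence statement: $r_p$ lifts $\lambda_p$, and on rational homotopy $\lambda_p$ is an isomorphism; I need to see that $(r_p)_*$ is an isomorphism on $H^*(-;\mathbb Z/p')$. The cleanest route is via homotopy groups localized at $p'$: it suffices to check $(r_p)_*$ is an isomorphism on $\pi_m(K)\otimes\mathbb Z_{(p')}$ for all $m$. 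For $m>n$ these groups are already rational and $\lambda_p$ is an isomorphism there; for $m\le n$, Lemma \ref{lem:Kn}(ii) says $(r_{p,n})_*$ on $\pi_m(K_n)$ has a $\mathbb Z$-eigenbasis with eigenvalues $p^{n_i}$, which are units in $\mathbb Z_{(p')}$ since $p\ne p'$ — hence an isomorphism after localizing at $p'$. By the mod-$p'$ Hurewicz/Whitehead theorem for nilpotent (here simply connected) spaces, an isomorphism on all $\pi_m\otimes\mathbb Z_{(p')}$ gives an isomorphism on $H_*(-;\mathbb Z/p')$, hence on $H^*(-;\mathbb Z/p')$ by universal coefficients.

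The main obstacle I anticipate is item (ii): making precise that being a lift of $\lambda_p$ forces $p'$-equivalence. The eigenbasis clause in Lemma \ref{lem:Kn}(ii) is exactly what is needed, but one must be careful that the extension $\widehat r_p$ over the high-dimensional cells does not spoil the integral structure of $(r_p)_*$ on the low homotopy groups — it does not, because those cells only affect $\pi_m$ for $m>n$ — and that the passage from "isomorphism on $\pi_*\otimes\mathbb Z_{(p')}$" to "isomorphism on mod-$p'$ homology" is legitimate, which it is for simply connected spaces of finite type.
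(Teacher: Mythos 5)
Your construction of $K$ does not work: the space $\widehat K$ obtained from $K_n$ by attaching cells to kill $\pi_m(K_n)$ for all $m>n$ is (a model of) the Postnikov truncation $(K_n)_n$, and such a space is essentially never a finite complex, nor does it have vanishing homology above degree $n$. Killing homotopy groups by attaching cells gives you no control over homology --- it typically creates homology in the dimensions of the new cells and above --- and the end result is a simply connected space with finitely many nonzero, finitely generated homotopy groups; by Serre's theorem such a space, if noncontractible, has nonzero homology in infinitely many degrees and hence cannot be homotopy equivalent to a finite complex. Concretely, take $Y=S^2$: then $K_2$ has $\pi_2\cong\mathbb Z$ and $\pi_3\cong\mathbb Q$, and killing $\pi_{>2}$ yields $K(\mathbb Z,2)\simeq\mathbb{CP}^\infty$. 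The higher rational homotopy groups of $K_n$ are not junk to be discarded; they are exactly what cancels the homology that the truncation would otherwise have (this is why Lemma \ref{lem:Kn}(iii) insists they be $\mathbb Q$-vector spaces mapping isomorphically to $\pi_m(Y_{(0)})$), so they must be retained, not coned off. Your claim that ``one arranges $H_m(\widehat K)=0$ for $m>n$'' is the precise point at which the argument fails, and the subsequent finiteness claim fails with it. (A smaller issue: even if one wanted to kill $\pi_{n+1}(K_n)\cong\mathbb Q^d$, this requires infinitely many cells at each stage.)

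The paper goes in the opposite direction: instead of taking a quotient of $K_n$, it builds a finite complex mapping \emph{into} $K_n$. Since $H_m(K_n)$ is finitely generated for $m\le n$, one constructs a finite $n$-dimensional complex $K'\to K_n$ inducing isomorphisms on $H_m$ for $m<n$ and a surjection on $H_n$, then attaches $(n+1)$-cells killing the kernel on $H_n$ (a free abelian group, so no new $H_{n+1}$ appears); the result $K$ is $(n+1)$-dimensional, which is how $H_{>n}(K)=0$ is really achieved. The self-maps $r_p$ are obtained by compressing $r_{p,n}\circ\iota'$ into $K'$ and extending over the $(n+1)$-cells, rather than by extending $r_{p,n}$ over attached cells. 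Your arguments for (ii) (eigenvalues $p^{n_i}$ are units in $\mathbb Z_{(p')}$, then the mod-$p'$ Hurewicz/Whitehead theorem) and for (iii) (via Lemma \ref{lem:Z/pZ} plus vanishing of $H_{>n}$) are essentially the ones the paper uses and would survive once the construction of $K$ is repaired, although for (ii) you would then need to run the localization argument on $K_n$ and transfer it to $K$ through the homology isomorphisms, since the honest $K$ has non-rational homotopy in degrees $>n$.
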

\begin{proof}
  Let $n$ be the dimension of $Y$, and let $K_n$ be as in Lemma \ref{lem:Kn}.
  Since $(K_n)_n$ is of finite type, we can build a finite $n$-complex $K'$ with
  a map $\iota':K' \to K_n$ which induces isomorphisms on $H_m$ for every $m<n$
  and a surjection on $H_n$.  In particular, there is no obstruction to
  homotoping the map $r_{p,n} \circ \iota':K' \to K_n$ so that its image lands
  $\iota'(K')$; we can then lift this to a map $r_p':K' \to K'$.

  Now, by the Hurewicz theorem, since the homotopy fiber of $\iota'$ is
  $(n-1)$-connected, the Hurewicz map $\pi_{n+1}(K_n,K') \to H_{n+1}(K_n,K')$ is
  surjective.  Therefore we can add $(n+1)$-cells to $K'$ which kill the kernel
  of $\iota'_*:H_n(K') \to H_n(K_n)$.  Moreover, since $K'$ is $n$-dimensional,
  $H_n(K')$ is free abelian, and so is this kernel; thus we can do this without
  adding any homology in degree $n+1$.  We call the resulting $(n+1)$-complex
  $K$; by construction it satisfies (i).

  Note that $H^{n+1}(K;\pi)$ is zero integrally and rationally, but may be
  nontrivial with torsion coefficients.  In particular, $\iota'$ extends uniquely
  over the $(n+1)$-cells of $K$, since $\pi_{n+1}(K_n)$ is a $\mathbb{Q}$-vector
  space.  This gives us a map $\iota:K \to K_n$, and we then set
  $q=q_n \circ \iota$. This is a rational equivalence since $\iota$ induces an
  isomorphism on $H_m$ for $m \leq n$ and both $K$ and $Y_{(0)}$ are homologically
  trivial in degrees $>n$.  Finally, $r_p'$ extends to a map $r_p:K \to K$, and
  this extension is unique up to torsion; therefore
  $q \circ r_p \simeq \lambda_p \circ q$.

  Since the homotopy groups of $K_n$ are either rational or free abelian, and the
  maps induced by $r_{p,n}$ on the free abelian groups $\pi_m(K_n)$, $m \leq n$,
  have an eigenbasis whose vectors are multiplied by powers of $p$, $r_{p,n}$
  induces isomorphisms on $\pi_*(K_n) \otimes \mathbb Z/p'\mathbb Z$ for every
  prime $p' \neq p$.  By the mod $p'$ Hurewicz theorem, $r_{p,n}$ also induces
  isomorphisms on $H_*(K_n;\mathbb Z/p'\mathbb Z)$.  Now, $\iota'$ induces
  isomorphisms on mod $p'$ homology in degrees $m<n$ and a surjection in degree
  $n$, and factors into the inclusion $K \hookrightarrow K_n$ and the homology
  isomorphism $\iota$.  Since $r_p'$ induces isomorphisms in degrees $m<n$ and on
  the quotient of the surjection in degree $n$, $r_p$ induces isomorphisms on all
  mod $p'$ homology and cohomology groups.

  Condition (iii) comes directly from Lemma \ref{lem:Z/pZ}, since $r_p$ induces
  the same map on $H_{\leq n}$ as $r_{p,n}$, and $H_{>n}(K)=0$.
\end{proof}
\begin{lem} \label{lem:YtoK}
  There is a map $f:Y \to K$ which commutes with $\lambda_t$ after
  rationalization; more precisely, $q \circ f \simeq \lambda_t \circ \ell$ for
  some $t$.
\end{lem}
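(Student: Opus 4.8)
The plan is to realize $Y_{(0)}$ as a mapping telescope assembled from copies of $K$ glued along the maps $r_p$, and then exploit the compactness of $Y$. Choose an enumeration $p_1,p_2,p_3,\dots$ of the primes in which every prime occurs in arbitrarily long blocks of consecutive terms, and form the mapping telescope
\[
T \;=\; \operatorname{Tel}\bigl(K \xrightarrow{\,r_{p_1}\,} K \xrightarrow{\,r_{p_2}\,} K \xrightarrow{\,r_{p_3}\,}\cdots\bigr),
\]
with $j_m\colon K\to T$ the inclusion of the $m$th stage, so that $j_{m-1}\simeq j_m\circ r_{p_m}$ and $T$ is the increasing union of its finite sub-telescopes, each of which retracts onto a copy of $K$ via some $j_m$. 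Since $\{\lambda_t\}$ is a one-parameter family we have $\lambda_{1/(p_1\cdots p_m)}\circ\lambda_{p_m}=\lambda_{1/(p_1\cdots p_{m-1})}$, so the homotopies $q\circ r_{p_j}\simeq\lambda_{p_j}\circ q$ of Lemma~\ref{lem:KtoK}(ii) make the maps $\psi_m:=\lambda_{1/(p_1\cdots p_m)}\circ q\colon K\to Y_{(0)}$ (with $\psi_0=q$) into a compatible family, $\psi_m\circ r_{p_m}\simeq\psi_{m-1}$; these glue to a single map $\psi\colon T\to Y_{(0)}$ with $\psi\circ j_m\simeq\psi_m$. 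Each $\psi_m$ is a rational equivalence, since $q$ is one and $\lambda_t$ is a self-homotopy-equivalence of $Y_{(0)}$ for $t\ne 0$; hence $\psi$ induces an isomorphism on rational homology, the latter being a filtered colimit of the isomorphisms $(\psi_m)_*$.

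The crux is to show that $T$ is a rational space, after which $\psi$ is automatically a homotopy equivalence. Fix a prime $\ell$. By Lemma~\ref{lem:KtoK}(ii) the map $r_{p_j}$ is an $\ell$-equivalence whenever $p_j\ne\ell$, so $(r_{p_j})_*$ is an isomorphism on $\widetilde H_*(K;\mathbb Z/\ell)$; and by Lemma~\ref{lem:KtoK}(iii) some power $s_\ell=r_\ell^{\,k}$ of $r_\ell$ induces $0$ on $\widetilde H_*(K;\mathbb Z/\ell)$, i.e.\ $(r_\ell)_*^{\,k}=0$ there. Because the enumeration contains an $\ell$-block of length $\ge k$ past any given stage, every sufficiently long composite $(r_{p_b})_*\circ\cdots\circ(r_{p_{a+1}})_*$ acting on $\widetilde H_*(K;\mathbb Z/\ell)$ contains the consecutive sub-composite $(r_\ell)_*^{\,k}=0$, hence vanishes; therefore $\widetilde H_*(T;\mathbb Z/\ell)=\operatorname{colim}\bigl(\widetilde H_*(K;\mathbb Z/\ell),(r_{p_j})_*\bigr)=0$. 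As this holds for every $\ell$, the integral homology of $T$ is a $\mathbb Q$-vector space in each positive degree; $T$ is simply connected (a telescope of simply connected complexes along pointed maps), so $T$ is a rational space, and the rational-homology equivalence $\psi\colon T\to Y_{(0)}$ between simply connected rational spaces is a homotopy equivalence. This step — that telescoping the merely rational equivalences $r_p$ over all primes recovers the rationalization of $K$ — is where the two special features built into $K$ in Lemma~\ref{lem:KtoK} are genuinely used, and it is the main obstacle; the rest is formal.

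To finish, consider $\psi^{-1}\circ\ell\colon Y\to T$. Since $Y$ is a finite complex its image lies in one of the finite sub-telescopes, so this map factors up to homotopy through a structure map: there are an $m$ and a map $f\colon Y\to K$ with $j_m\circ f\simeq\psi^{-1}\circ\ell$. Then $\psi_m\circ f\simeq\psi\circ j_m\circ f\simeq\ell$, that is $\lambda_{1/(p_1\cdots p_m)}\circ q\circ f\simeq\ell$; composing with $\lambda_{p_1\cdots p_m}$ gives $q\circ f\simeq\lambda_t\circ\ell$ with $t:=p_1\cdots p_m$, as desired. The remaining ingredients used above — gluing $\psi$ out of the $\psi_m$, the fact that a telescope is computed from a cofinal subsystem, and the factorization of a map out of a compact space through a finite stage — are standard, granting the usual care in replacing maps by cellular ones so that the telescopes are honest CW complexes.
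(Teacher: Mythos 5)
Your proof is correct, and its overall architecture is the same as the paper's: build an infinite mapping telescope of copies of $K$ along the maps $r_p$, show that the map down to $Y_{(0)}$ assembled from $q$ and the $\lambda_{1/t}$'s is a homotopy equivalence, and then use compactness of $Y$ to push $\ell$ into a single finite stage. The one genuine divergence is in how you certify that the telescope is equivalent to $Y_{(0)}$. The paper uses the sequence $r_2,r_2,r_3,r_2,r_3,r_5,\dots$ and argues on homotopy groups: by Lemma \ref{lem:KtoK}(i) and (ii) the colimit of the free abelian groups $\pi_m(K)$ under the $(r_p)_*$ is exactly $\pi_m(K)\otimes\mathbb Q$, so $\hat q$ is a $\pi_*$-isomorphism in degrees $\le n$ and a homology isomorphism above. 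You instead prove directly that the telescope is a rational space by killing $\widetilde H_*(-;\mathbb Z/\ell)$ for every prime $\ell$, which forces you to invoke Lemma \ref{lem:KtoK}(iii) (a step the paper saves for Lemma \ref{lem:KtoY}) and, correspondingly, to choose an enumeration of the primes with arbitrarily long constant blocks so that the nilpotent composite $(r_\ell)_*^{k}=0$ actually appears as a consecutive subword --- a point you handle correctly, since with an interleaved enumeration the conjugating isomorphisms would block the argument. The trade-off is that your route leans on the harder ingredient (iii) but avoids any discussion of eigenbases on homotopy groups; both are valid, and the rest of your argument (gluing $\psi$ from the $\psi_m$, homology of a telescope as a filtered colimit, the final bookkeeping giving $t=p_1\cdots p_m$) is standard and correctly executed.
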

\begin{proof}
  Let $Z$ be an infinite telescope of mapping cylinders build using copies of $K$
  and maps
  \[r_2,r_2,r_3,r_2,r_3,r_5,\ldots\]
  Now consider the map $\hat q:Z \to Y_{(0)}$ extending $q$ on the first copy of
  $K$.  Let's call the inclusion map of this copy $i_1$, so then
  $\hat q \circ i_1 \simeq q$.

  By Lemma \ref{lem:KtoK}(i) and (ii), $\hat q$ induces isomorphisms on $\pi_m$
  for $m \leq n$.  Thus, by the Hurewicz theorem, $\hat q$ also induces
  isomorphisms on $H_m$ for $m \leq n$.  On the other hand, for $m>n$,
  \[H_m(Z) \cong H_m(Y_{(0)}) \cong 0.\]
  Therefore $\hat q$ is a homotopy equivalence, and so there is a map
  $\hat\ell:Y \to Z$ such that $\hat q \circ \hat\ell \simeq \ell$.  Since $Y$ is
  compact, this map lands in a finite set of mapping cylinders, and therefore we
  can homotope it into a single copy of $K$.  Let's call the inclusion map of
  this copy $i_t$, where $t=p_1 \cdots p_N$ such that
  \[i_1 \simeq i_t \circ R_t=i_t \circ r_{p_N} \circ \cdots \circ r_{p_1}.\]
  The resulting map is $f$.  To see that $q \circ f \simeq \lambda_t \circ \ell$,
  consider the diagram
  \[\xymatrixrowsep{1pc}\xymatrixcolsep{3pc}
  \xymatrix{
    Y \ar[r]_{\hat\ell} \ar@/^1pc/[rr]^\ell \ar[dd]_f & Z \ar[r]_-{\hat q} &
    Y_{(0)} \ar[dd]^{\lambda_t} \\
    & K \ar[u]_{i_1} \ar[ld]^{R_t} \ar[ru]_q \\
    K \ar[ruu]^{i_t} \ar[rr]_q && Y_{(0)}.
  }\]
  The triangles all commute up to homotopy by construction, and the bottom right
  square by Lemma \ref{lem:KtoK}(ii).  Moreover, every map in the diagram is a
  rational equivalence; in other words, after rationalization, every arrow is
  reversible.  This implies that the outer square commutes rationally.  But since
  the target $Y_{(0)}$ is a rational space, it commutes integrally as well.
\end{proof}
\begin{lem} \label{lem:KtoY}
  There is a map $g:K \to Y$ such that $g \circ f$ realizes $\lambda_{t_0}$ for
  some integer $t_0$, i.e.\ $\ell\circ g \circ f \simeq \lambda_{t_0} \circ \ell$.
\end{lem}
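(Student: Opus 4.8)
The plan is to reverse the roles of $K$ and $Y$ and run the same telescope argument as in Lemma \ref{lem:YtoK}, but now building a map out of $K$ rather than into $K$. First I would observe that since $\ell:Y\to Y_{(0)}$ is the rationalization and $q:K\to Y_{(0)}$ is a rational equivalence, $Y$ and $K$ have the same rational homotopy type, and the maps $\lambda_p$ on $Y_{(0)}$ pull back, via $q$, to the self-maps $r_p:K\to K$ of Lemma \ref{lem:KtoK}. The key extra input is part (iii) of that lemma: some power $s_p$ of $r_p$ kills mod-$p$ homology. This is exactly what allows one to ``invert'' $Y\to K$ up to a controlled rational multiple, because a map that induces isomorphisms mod $p'$ for all $p'\neq p$ and the zero map mod $p$ is the kind of map Mimura--Toda--Oka theory lets one use to reconstruct $Y$ from its localizations.

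Concretely, I would build a second telescope $Z'$ out of copies of $Y$ (or of a finite complex rationally equivalent to $Y$), interleaving the maps $f\circ g_p$ for suitable $g_p:K\to Y$ — or more cleanly, use the arithmetic-square / fracture approach: for each prime $p$ the map $f$ already realizes some $\lambda_t$, and by composing with an appropriate power of $r_p$ on the source we can make $f$ a $p$-local equivalence after multiplying $t$ by a power of $p$; gluing these $p$-local inverses over the rational data (which agree because everything is compatible with $\ell$ and $q$ rationally, by the same ``the target is rational, so rational commutativity implies genuine commutativity'' trick used at the end of Lemma \ref{lem:YtoK}) produces a genuine map $g:K\to Y$. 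Since $Y$ is finite, only finitely many primes are involved, so $t_0$ is a finite integer. Then $g\circ f:Y\to Y$ rationalizes to $q\circ$(something)$\circ\ell$, and chasing the diagram shows $\ell\circ g\circ f\simeq\lambda_{t_0}\circ\ell$; once again this last homotopy need only be checked rationally because $Y_{(0)}$ is a rational space and $\ell$ is a rational equivalence onto it.

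The main obstacle I expect is precisely the gluing step: assembling the $p$-local maps $K_{(p)}\to Y_{(p)}$ into a single integral map $K\to Y$ requires that they agree after rationalization, and one has to be careful that the choices of homotopies realizing $q\circ r_p\simeq\lambda_p\circ q$ (and their powers) are compatible across primes. The clean way around this, and the one I would pursue, is to avoid explicit fracture and instead mimic Lemma \ref{lem:YtoK} verbatim: form a telescope $Z'$ on copies of $Y$ with bonding maps built from the $f\circ g$-type composites supplied by $s_p$, show $Z'\to Y_{(0)}$ is a homotopy equivalence using (i)--(iii) of Lemma \ref{lem:KtoK} plus the mod-$p$ and mod-$p'$ Hurewicz theorems exactly as before, obtain $K\to Z'$ splitting off $q$, compress it into one copy of $Y$ by compactness of $K$, and read off $g$ and the integer $t_0$. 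The homotopy groups involved are free abelian below $\dim Y$ and rational above, so no torsion obstructions survive, and the diagram chase identifying $\ell\circ g\circ f$ with $\lambda_{t_0}\circ\ell$ is formal.
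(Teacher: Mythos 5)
There is a genuine gap, and it sits exactly where you flag the ``main obstacle.'' The fallback you propose as the clean route --- a telescope $Z'$ built on copies of $Y$ with bonding maps ``built from the $f\circ g$-type composites supplied by $s_p$'' --- is circular: those bonding maps are self-maps of $Y$ realizing the $\lambda_p$, and no such self-maps are available a priori. Their existence is the content of Theorem \ref{thm:appx} itself, and the whole reason the intermediate complex $K$ was constructed is that it, unlike $Y$, comes equipped with the self-maps $r_p$. The asymmetry between Lemma \ref{lem:YtoK} and Lemma \ref{lem:KtoY} is real, and the telescope trick does not dualize. Your fracture sketch is closer to the truth but misstates the mechanism: composing $f:Y\to K$ with powers of $r_p$ on the target multiplies homotopy groups by \emph{more} powers of $p$, so it cannot turn $f$ into a $p$-local equivalence; and the gluing of $p$-local inverses is precisely the step you admit you cannot carry out.

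What actually closes the gap --- and what the paper uses --- is the implication (b$)\Rightarrow($a) of Theorem 2.1 of Mimura--O'Neill--Toda: if a finite complex $K$ admits, for every prime $p$, a self-map $s_p$ inducing zero on $H^*(K;\mathbb{Z}/p\mathbb{Z})$ (this is Lemma \ref{lem:KtoK}(iii), with $s_p$ a power of $r_p$), then any map $h:K\to X$ lifts through any rational equivalence $Y\to X$ \emph{after precomposition with a composite $k$ of the $s_p$'s}. The proof is an obstruction-theoretic lift cell by cell: the obstructions are torsion classes in $H^*(K;\pi_*(\mathrm{fiber}))$, and each $p$-primary obstruction is annihilated by precomposing with $s_p$; only finitely many primes occur because $K$ is finite-dimensional. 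Applying this with $X=K$, $h=\mathrm{id}_K$, and the rational equivalence $f:Y\to K$ from Lemma \ref{lem:YtoK} yields $g:K\to Y$ with $f\circ g$ a composite of $r_p$'s realizing $\lambda_{t_0}$; the identity $\ell gf\simeq\lambda_{t_0}\ell$ then follows from the short formal computation $\ell gf\simeq\lambda_{t}^{-1}qfgf\simeq\lambda_{t}^{-1}\lambda_{t_0}qf\simeq\lambda_{t_0}\ell$, which is the part of your plan that is correct. Without this lifting theorem (or a reproduction of its obstruction argument), your proposal does not produce the map $g$.
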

\begin{proof}
  We again use the proof of \cite[Theorem 2.1]{MOT}.  That theorem asserts the
  equivalence of several conditions for a finite simply connected CW complex $K$,
  including:
  \begin{enumerate}
  \item[(b)] For any prime $p$, there is a map $s_p:K \to K$ which induces the
    zero map on $H^*(K; \mathbb{Z}/p\mathbb{Z})$.
  \item[(a)] Given a rational equivalence $f:Y \to X$ between two CW complexes
    and a map $h:K \to X$, there are maps $g:K \to Y$ and $k:K \to K$ completing
    the diagram
    \[\xymatrix{
      K \ar@{-->}[r]^k \ar@{-->}[d]^g & K \ar[d]_h \\
      Y \ar[r]^f & X .
    }\]
  \end{enumerate}
  We showed in Lemma \ref{lem:KtoK} that $K$ satisfies (b) and that we can take
  $s_p$ to be a power of $r_p$.  The proof that (b) implies (a) goes through
  several steps, but the resulting map $k:K \to K$ is always a composition of
  $s_p$ for various $p$.  Applying (a) with $X=K$, $h=\id$, and $f$ the map from
  Lemma \ref{lem:YtoK}, we get a map $g:K \to Y$ such that $f \circ g:K \to K$ is
  a composition of various $r_p$, whose product is, let's say, $t_0$.  Then
  \[\ell gf \simeq \lambda_{t^{-1}}qfgf \simeq \lambda_{t^{-1}}\lambda_{t_0}qf \simeq
  \lambda_{t^{-1}}\lambda_{t_0}\lambda_t\ell \simeq \lambda_{t_0}\ell. \qedhere\]
\end{proof}
\begin{proof}[Proof of Theorem \ref{thm:appx}]
  For any $z \in \mathbb{Z}$, let $r_z:K \to K$ be the composition of the $r_p$'s
  in its prime decomposition.  Then $g \circ r_z \circ f$ is a map realizing the
  automorphism $\lambda_{zt_0}$.
\end{proof}
\begin{proof}[Proof of Theorem \ref{thm:XtoY}]
  Using Lemmas \ref{lem:YtoK} and \ref{lem:KtoY}, we construct maps
  \begin{align*}
    Y &\xrightarrow{f} K \xrightarrow{g} Y \\
    Y' &\xrightarrow{f'} K \xrightarrow{g'} Y'
  \end{align*}
  such that $\ell gf \simeq \lambda_t\ell$ and
  $\ell' g'f' \simeq \lambda_{t'}\ell'$.  Then $g'f:Y \to Y'$ and $fg':Y' \to Y$
  are the desired maps.
\end{proof}
Finally, the fact that the $r_p$ are $p'$-equivalences for every $p \neq p'$
completes an alternate proof of Proposition 3.3 in \cite{BMSS}.  This can be used
to recover the theorem that spaces with positive weights are $p$-universal for
every $p$.

\subsection*{Acknowledgements}
I would like to thank Manuel Amann, Richard Hain, and Antonio Viruel for
supplying missing references and other useful comments.  I would also like to
thank the very thorough and careful anonymous referee who graciously corrected
the inaccuracies that inevitably cropped up in the hastily written first version.
I was partly supported by NSF individual grant DMS-2001042.

\bibliographystyle{amsplain}
\bibliography{liphom}

\end{document}